\newtheorem{theorem}{Theorem}
\newtheorem{lemma}[theorem]{Lemma}
\newenvironment{remark}{\rem\rm}{\endrem}
\newcounter{unnumber}
\newenvironment{proof}{\prf\rm}{\hfill{$\blacksquare$}\endprf}
\newcommand{\R}{\mathbb{R}}%
\newcommand{\N}{\mathbb{N}}%
\DeclareMathOperator*\ran{ran}%
\DeclareMathOperator*\argmin{argmin}
\title{A second order dynamical system with Hessian-driven damping and penalty term associated to variational inequalities}
\author{Radu Ioan Bo\c{t} \thanks{University of Vienna, Faculty of Mathematics, Oskar-Morgenstern-Platz 1, A-1090 Vienna, Austria,
email: radu.bot@univie.ac.at.} \and
Ern\"{o} Robert Csetnek \thanks {University of Vienna, Faculty of Mathematics, Oskar-Morgenstern-Platz 1, A-1090 Vienna, Austria,
email: ernoe.robert.csetnek@univie.ac.at. Research supported by FWF (Austrian Science Fund), Lise Meitner Programme, project M 1682-N25.}}
\begin{document}
\maketitle

\noindent \textbf{Abstract.} We consider the minimization of a convex objective function subject to the set of minima of another convex function, under the assumption that both functions  are twice continuously
differentiable. We approach this optimization problem from a continuous perspective by means of a second order dynamical system with Hessian-driven damping and a penalty term corresponding to the constrained function.  By constructing appropriate energy functionals, we  prove weak convergence of the trajectories generated by this differential equation to a minimizer of the optimization problem
as well as convergence for the objective function values along the trajectories. The performed investigations rely on Lyapunov analysis in combination with the continuous version of the Opial Lemma. In case the objective function is strongly convex, we can even show strong convergence of the trajectories.
\vspace{1ex}

\noindent \textbf{Key Words.} dynamical systems, Lyapunov analysis, convex optimization,
nonautonomous systems, Newton dynamics\vspace{1ex}

\noindent \textbf{AMS subject classification.} 34G25, 47J25, 47H05, 90C25

\section{Introduction}\label{sec1}

The Newton-like dynamical system
\begin{equation}\label{dyn-syst-psi0-intr}\left\{
\begin{array}{ll}
\ddot x(t) + \gamma\dot x(t) + \lambda\nabla^2\Phi(x(t))(\dot x(t)) +  \nabla \Phi(x(t))=0\\
x(0)=u_0, \dot x(0)=v_0,
\end{array}\right.\end{equation}
has been investigated by Alvarez, Attouch, Bolte and Redont in \cite{alv-att-bolte-red} in connection with the
optimization problem
\begin{equation}\label{opt-intr-phi}\inf_{x\in {\cal H}}\Phi(x).\end{equation}
Here, ${\cal H}$ is a real Hilbert space
endowed with inner product $\langle\cdot,\cdot\rangle$ and associated norm $\|\cdot\|\!=\!\sqrt{\langle \cdot,\cdot\rangle}$,
$u_0,v_0\in {\cal H}$ are the initial data, $\lambda,\gamma>0$, while $\nabla \Phi$ and $\nabla^2\Phi$ denote the gradient and Hessian
of the function $\Phi:{\cal H}\rightarrow\R$, respectively. We speak here about a second order system in time (through the presence of the acceleration term $\ddot x(t)$, which is associated to inertial effects) and in space (through $\nabla^2\Phi(x(t))$). One can also notice the presence of the geometric damping that acts on the velocity through the Hessian of the function $\Phi$.

As underlined in \cite{alv-att-bolte-red}, the dynamical system \eqref{dyn-syst-psi0-intr} can be seen as a mixture of the continuous Newton method
\begin{equation}\label{cn}\nabla^2\Phi(x(t))(\dot x(t))+\nabla\Phi(x(t))=0,\end{equation}
investigated by Alvarez and P\'{e}rez in \cite{alv-per1998}, with the heavy ball with friction system
\begin{equation}\label{heavy-ball}\ddot x(t) + \gamma\dot x(t)  +  \nabla \Phi(x(t))=0,\end{equation}
studied for the first time in Polyak \cite{polyak} and Antipin \cite{antipin}. Due to this remarkable fact, the dynamical system
\eqref{dyn-syst-psi0-intr} possesses most of the advantages of the systems \eqref{cn} and \eqref{heavy-ball}. We refer the reader to
\cite{alv-att-bolte-red, alv-per1998, att-bolte-red2002, att-mainge-red2012, att-peyp-red2016, att-red2001} for more insights on Newton-type dynamics and their motivations coming from mechanics and control theory.

The aim of this paper is to associate a second order Newton-type
dynamical system to the optimization problem
\begin{equation}\label{opt-intr}\inf_{x\in\argmin\Psi}\Phi(x),\end{equation}
where $\Phi,\Psi: {\cal H}\rightarrow \R$ are convex and twice differentiable functions, and to investigate its asymptotic properties.

Let us notice that, due to the first order optimality conditions, solving \eqref{opt-intr} can be formulated as
a variational inequality of the form
\begin{equation}\label{vi}
\mbox{find} \ x \in {\argmin\Psi}  \ \mbox{such that} \ \langle \nabla\Phi(x), y-x \rangle \geq 0 \ \forall y \in \argmin\Psi,
\end{equation}
where $\argmin\Psi$ denotes the set of minimizers of  $\Psi$ over ${\cal H}$.

Attouch and Czarnecki have assigned  in \cite{att-cza-10} to \eqref{opt-intr} the nonautonomous first order dynamical system
\begin{equation}\label{1ord-att-cz}\dot x(t)+\nabla \Phi(x(t))+\beta(t)\nabla \Psi(x(t))=0,\end{equation}
where $\beta:[0,+\infty)\rightarrow(0,+\infty)$ is a function of time assumed to tend to $+\infty$ as $t\rightarrow +\infty$, which penalizes the constrained function.
Several convergence results of the trajectories generated by \eqref{1ord-att-cz} to the solution set of \eqref{opt-intr} have been reported in \cite{att-cza-10} under the key assumption
\begin{equation}\label{ass-att-cz}\forall p\in\ran N_{\argmin\Psi} \ \int_0^{+\infty}
\beta(t)\left[\Psi^*\left(\frac{p}{\beta(t)}\right)-\sigma_{\argmin\Psi}\left(\frac{p}{\beta(t)}\right)\right]dt<+\infty,
\end{equation}
where $\Psi^*: {\cal H}\rightarrow \R\cup\{+\infty\}$ is the Fenchel-Legendre transformation of $\Psi$:
$$\Psi^*(p)=\sup_{x\in{\cal H}}\{\langle p,x\rangle-\Psi(x)\} \ \forall p\in {\cal H};$$
$\sigma_{\argmin\Psi}: {\cal H}\rightarrow \R\cup\{+\infty\}$ is the support function of the set $\argmin\Psi$:
$$\sigma_{\argmin\Psi}(p)=\sup_{x\in {\argmin\Psi}}\langle p,x\rangle \ \forall p\in {\cal H};$$
and $N_{\argmin\Psi}$ is the normal cone to the set $\argmin\Psi$, defined by
$$N_{\argmin\Psi}(x)=\{p\in{\cal H}:\langle p,y-x\rangle\leq 0 \ \forall y\in \argmin\Psi\}$$ for $x \in \argmin\Psi$ and
$N_{\argmin\Psi}(x)=\emptyset$ for $x\not\in \argmin\Psi$. Finally, $\ran N_{\argmin\Psi}$ denotes the range of the normal cone $N_{\argmin\Psi}$, that is, $p\in\ran N_{\argmin\Psi}$ if and only if there exists $x\in \argmin\Psi$ such that $p\in N_{\argmin\Psi}(x)$.
Let us notice that for $x\in {\argmin\Psi}$ one has $p\in N_{\argmin\Psi}(x)$ if and only if $\sigma_{\argmin\Psi}(p)=\langle p,x\rangle$.

We present a situation where the above condition \eqref{ass-att-cz} is fulfilled. According to \cite{att-cza-10}, if we take
$$\Psi(x)=\frac{1}{2}\inf_{y\in C}\|x-y\|^2,$$ for a nonempty, convex and closed set $C \subseteq {\cal H}$, then the condition
\eqref{ass-att-cz} is fulfilled if and only if $$\int_0^{+\infty}\frac{1}{\beta(t)} dt<+\infty,$$ which is trivially satisfied for
$\beta(t)=(1+t)^\alpha$ with $\alpha>1$.

The paper of Attouch and Czarnecki \cite{att-cza-10} was the starting point of a considerable number of research articles devoted to this subject,
including those addressing generalizations to variational inequalities formulated with maximal monotone operators
(see \cite{att-cza-10, att-cza-peyp-c, att-cza-peyp-p, noun-peyp, peyp-12, b-c-penalty-svva, b-c-penalty-vjm, banert-bot-pen,
b-c-dyn-pen, att-cab-cz, att-maing, b-c-dyn-sec-ord-pen, b-c-levenberg}). We refer also to the above-listed references for more general formulations
of the key assumption \eqref{ass-att-cz} and for further examples for which these conditions are satisfied.

In  \cite{b-c-dyn-sec-ord-pen} we approached  the optimization problem \eqref{opt-intr} through
the second order nonautonomous dynamical system
\begin{equation}\label{b-c-aa}\ddot x(t) + \gamma\dot x(t) + \nabla \Phi(x(t))+\beta(t)\nabla \Psi(x(t))=0,\end{equation}
where $\gamma > 0$ and $\beta:[0,+\infty)\rightarrow(0,+\infty)$ is a function of time. Under the assumption that $\beta$ tends to $+\infty$ as $t\rightarrow +\infty$, we proved weak convergence of the generated trajectories to a minimizer of \eqref{opt-intr} as well as convergence for the objective function values along the trajectories. We refer to \cite{att-cza-16} for another variant of this system, where the objective function is penalized instead of the constraint function.

The aim of this paper is to combine Newton-like dynamics with systems of the form \eqref{b-c-aa} in order to approach from a continuous perspective the solving of the optimization problem \eqref{opt-intr}. To this end, we propose ourselves to investigate in this paper the asymptotic behavior of the dynamical system
\begin{equation}\label{dyn-syst-intr}
\ddot x(t) + \gamma\dot x(t) + \lambda\nabla^2\Phi(x(t))(\dot x(t)) + \lambda\beta(t)\nabla^2\Psi(x(t))(\dot x(t)) +
\nabla \Phi(x(t))+(\beta(t)+\lambda\dot\beta(t))\nabla \Psi(x(t))=0.\end{equation}
Condition \eqref{ass-att-cz} will be again crucial in the analysis performed. By using Lyapunov analysis in combination with the continuous version of the Opial Lemma, we prove weak convergence of the trajectories to a minimizer of the optimization problem \eqref{opt-intr}
as well as convergence for the objective function values along the trajectories. In case the objective function is strongly convex, we can even show strong convergence of the trajectories.

\section{Preliminaries}\label{sec2}

In this section we will introduce preliminary notions and results that will be useful throughout the paper.

The following statement can be interpreted as the continuous counterpart of the convergence result of quasi-Fej\'er monotone sequences. For its proofs we refer the reader to \cite[Lemma 5.1]{abbas-att-sv}.

\begin{lemma}\label{fejer-cont1} Suppose that $F:[0,+\infty)\rightarrow\R$ is locally absolutely continuous and bounded from below and that
there exists $G\in L^1([0,+\infty))$ such that for almost every $t \in [0,+\infty)$ $$\dot F(t)\leq G(t).$$
Then there exists $\lim_{t\rightarrow +\infty} F(t)\in\R$.
\end{lemma}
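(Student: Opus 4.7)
The plan is to reduce the statement to the elementary fact that a monotone function bounded from below on $[0,+\infty)$ admits a finite limit, applied to a suitable modification of $F$ that absorbs the integrable defect $G$.

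First I would introduce the auxiliary function
$$H(t) := F(t) - \int_0^t G(s)\,ds.$$
Since $F$ is locally absolutely continuous by hypothesis and $G \in L^1([0,+\infty))$ implies that $t\mapsto \int_0^t G(s)\,ds$ is (locally) absolutely continuous, $H$ is locally absolutely continuous as well. Differentiating almost everywhere yields $\dot H(t) = \dot F(t) - G(t) \leq 0$ for almost every $t\in[0,+\infty)$, and the fundamental theorem of calculus for absolutely continuous functions then promotes this a.e.\ inequality to genuine monotonicity: $H$ is nonincreasing on $[0,+\infty)$.

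Second, I would establish that $H$ is bounded from below. By hypothesis there exists $m\in\R$ with $F(t)\geq m$ for every $t\geq 0$. Because $G\in L^1([0,+\infty))$, the function $t\mapsto \int_0^t G(s)\,ds$ is bounded above by some $M\in\R$ (in fact it converges to $\int_0^{+\infty} G(s)\,ds \in \R$ as $t\to+\infty$). Combining these estimates gives $H(t)\geq m-M$ on $[0,+\infty)$. A nonincreasing function bounded from below admits a finite limit, so $\ell := \lim_{t\to+\infty} H(t) \in \R$ exists.

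Finally, I would pass to the limit in the identity $F(t)=H(t)+\int_0^t G(s)\,ds$; since both terms on the right-hand side converge in $\R$, so does $F(t)$, and one obtains $\lim_{t\to+\infty}F(t)=\ell+\int_0^{+\infty} G(s)\,ds \in \R$. The proof is essentially routine and I do not expect any serious obstacle; the only delicate point is the passage from the pointwise a.e.\ sign condition on $\dot H$ to monotonicity of $H$, which relies crucially on the (local) absolute continuity of $H$ rather than mere continuity.
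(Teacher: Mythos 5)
Your proof is correct: the paper itself gives no argument for this lemma but simply cites \cite[Lemma 5.1]{abbas-att-sv}, and the argument there is exactly yours, namely passing to $H(t)=F(t)-\int_0^t G(s)\,ds$, which is locally absolutely continuous, nonincreasing (by integrating $\dot H\leq 0$ a.e.) and bounded below, hence convergent, so that $F(t)=H(t)+\int_0^t G(s)\,ds$ converges as well. Your closing remark is also on point: the upgrade from the a.e.\ sign condition to monotonicity is precisely where local absolute continuity is needed.
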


We will focus our investigations on the following second order dynamical system
\begin{equation}\label{dyn-syst}\left\{
\begin{array}{ll}
\!\!\!\ddot x(t) + \gamma\dot x(t) + \!\lambda\nabla^2\Phi(x(t))(\dot x(t)) + \!\lambda\beta(t)\nabla^2\Psi(x(t))(\dot x(t)) +
\nabla \Phi(x(t))+(\beta(t)+\lambda\dot\beta(t))\nabla \Psi(x(t))=0\\
\!\!\!x(0)=u_0, \dot x(0)=v_0,
\end{array}\right.\end{equation}
where $\gamma,\lambda>0$, $u_0,v_0\in {\cal H}$, provided that the following assumptions are satisfied:

\begin{align*}
(H_\Psi)& \ \Psi:{\cal H}\rightarrow [0,+\infty) \mbox{ is convex, twice differentiable
  such that } \nabla\Psi \mbox{ and }\nabla^2\Psi \\&\mbox{ are Lipschitz continuous} \mbox{ and }\argmin\Psi=\Psi^{-1}(0)\neq\emptyset;\\
(H_\Phi)& \ \Phi:{\cal H}\rightarrow \R \mbox{ is convex, twice differentiable, bounded from below, such that }\nabla\Phi\mbox{ and }
\nabla^2\Phi\\&  \mbox{ are Lipschitz continuous}
 \mbox{ and }S:=\{z\in \argmin\Psi: \Phi(z)\leq \Phi(x) \  \forall x\in\argmin\Psi\}\neq\emptyset; \\
(H_{\beta})& \ \beta :[0,+\infty) \rightarrow (0,+\infty) \mbox{ is a }C^1\mbox{-function with } \lim_{t\rightarrow+\infty}\beta(t)=+\infty
\mbox{ and it satisfies the growth}\\&\mbox{ condition }
0\leq\dot\beta\leq k\beta,\mbox{ where } 0 < k<\frac{\theta}{1+\lambda\gamma}\min\left\{\frac{2\gamma}{3}, \frac{2}{\lambda}\right\}
\mbox{ for }\theta\in(0,1).\end{align*}

We look at strong global solutions $x:[0,+\infty)\rightarrow{\cal H}$ of the dynamical system of \eqref{dyn-syst}, that is, $x$ and $\dot x$ are locally absolutely continuous (in other words, absolutely continuous on each interval $[0,b]$ for $0<b<+\infty$),
$x(0)=u_0, \dot x(0)=v_0$ and
$$\ddot x(t) + \gamma\dot x(t) + \lambda\nabla^2\Phi(x(t))(\dot x(t)) + \lambda\beta(t)\nabla^2\Psi(x(t))(\dot x(t)) +
\nabla \Phi(x(t))+(\beta(t)+\lambda\dot\beta(t))\nabla \Psi(x(t))=0$$
for almost every $t \in [0,+\infty)$.

In view of the Lipschitz continuity of $\nabla \Psi, \nabla ^2\Psi$ and $\nabla \Phi, \nabla ^2\Phi$, assumed in
$(H_\Psi)$ and $(H_\Phi)$, respectively, the existence and uniqueness of strong global solutions of \eqref{dyn-syst} is a consequence
of the Cauchy-Lipschitz-Picard Theorem (see for example \cite{att-maing, b-c-dyn-sec-ord, alv-att-bolte-red, cabot-engler-gadat-tams}).

\begin{remark}\label{rem-heavy-ball} \begin{enumerate}\item[(a)] In case $\Psi=0$, the dynamical system \eqref{dyn-syst} becomes
\begin{equation}\label{dyn-syst-psi0}\left\{
\begin{array}{ll}
\ddot x(t) + \gamma\dot x(t) + \lambda\nabla^2\Phi(x(t))(\dot x(t)) +  \nabla \Phi(x(t))=0\\
x(0)=u_0, \dot x(0)=v_0,
\end{array}\right.\end{equation}
the convergence of which has been investigated in \cite{alv-att-bolte-red} in connection with the minimization of the function
$\Phi$ over ${\cal H}$.

\item[(b)] The time discretization of second order dynamical systems leads to iterative algorithms involving inertial terms,
which basically means that every new iterate is constructed in terms of the previous two iterates (see for example
\cite{alvarez2004, alvarez-attouch2001}). In view of this observation, it makes sense to investigate a time discretized version of \eqref{dyn-syst} and
to study the convergence properties of the generated iterates in relation with the solving of the optimization problem \eqref{opt-intr}. We leave this topic as future research work.
\end{enumerate}
\end{remark}

\section{Convergence of the trajectories and of the objective function values}\label{sec3}

This section is devoted to the asymptotic analysis of the trajectory generated by the dynamical system \eqref{dyn-syst}. We show weak convergence of the trajectory $x(\cdot)$ to an optimal solution
of \eqref{opt-intr} as well as convergence for the objective function values along the trajectory as $t \rightarrow +\infty$, under the following assumption:
\begin{enumerate}
 \item[$(H)$] $\forall p\in\ran N_{\argmin \Psi} \ \int_0^{+\infty}
\beta(t)\left[\Psi^*\left(\frac{p}{\beta(t)}\right)-\sigma_{\argmin \Psi}\left(\frac{p}{\beta(t)}\right)\right]dt<+\infty$.
\end{enumerate}

\begin{remark}
\begin{itemize}
\item[(a)] Let $\delta_{\argmin\Psi}:{\cal H}\rightarrow\R\cup\{+\infty\}$ be the indicator function
of ${\argmin\Psi}$, which is the function that takes the value $0$ on the set ${\argmin\Psi}$ and $+\infty$, otherwise. Due to $\Psi\leq\delta_{\argmin \Psi}$ (see $(H_\Psi)$), we
have $$\Psi^*\geq\delta_{\argmin \Psi}^*=\sigma_{\argmin \Psi}.$$
\item[(b)] Considering the case when $\Psi=0$ (see Remark \ref{rem-heavy-ball}(a)), we have $N_{\argmin \Psi}(x)=\{0\}$ for every $x\in \argmin \Psi={\cal H}$,
$\Psi^*=\sigma_{\argmin \Psi}=\delta_{\{0\}}$ and $(H)$ trivially holds.
\end{itemize}
\end{remark}

For $\delta>0$, we considere the following energy functional that will play an important role in the analysis below:
\begin{equation}\label{e-delta}
E_{\delta}(t)=\delta\big(\Phi(x(t))+\beta(t)\Psi(x(t))\big)+\frac{1}{2}\left\|\dot x(t)+\lambda\nabla\Phi(x(t))+\lambda\beta(t)\nabla\Psi(x(t))\right\|^2 \ \forall t >0.
\end{equation}
For its derivative we have for almost every $t \in [0,+\infty)$
\begin{align*}
\dot E_{\delta}(t) = & \ \delta\langle \nabla\Phi(x(t)),\dot x(t)\rangle+\delta\beta(t)\langle\nabla\Psi(x(t)),\dot x(t)\rangle +\delta\dot\beta(t)\Psi(x(t))+\\
\langle \ddot x(t)+  \!\lambda&\nabla^2\Phi(x(t))(\dot x(t))\!+\!\lambda\beta(t)\nabla^2\Psi(x(t))(\dot x(t))+\!\lambda\dot\beta(t)\nabla\Psi(x(t)),\dot x(t)+\!\lambda\nabla\Phi(x(t))+\!\lambda\beta(t)\nabla\Psi(x(t))\rangle\\
= & \ \delta\langle \nabla\Phi(x(t)),\dot x(t)\rangle+\delta\beta(t)\langle\nabla\Psi(x(t)),\dot x(t)\rangle
+\delta\dot\beta(t)\Psi(x(t))+\\
& \langle-\gamma\dot x(t)-\nabla\Phi(x(t))-\beta(t)\nabla\Psi(x(t)),\dot x(t)+\lambda\nabla\Phi(x(t))+\lambda\beta(t)\nabla\Psi(x(t))\rangle\\
=& \ \delta\langle \nabla\Phi(x(t)),\dot x(t)\rangle+\delta\beta(t)\langle\nabla\Psi(x(t)),\dot x(t)\rangle
+\delta\dot\beta(t)\Psi(x(t)) -\gamma\|\dot x(t)\|^2-\\
&(1+\gamma\lambda)\langle \nabla\Phi(x(t)),\dot x(t)\rangle-(1+\gamma\lambda)\beta(t)\langle \nabla\Psi(x(t)),\dot x(t)\rangle-\lambda\|\nabla\Phi(x(t))\|^2-\\
& \lambda\beta^2(t)\|\nabla\Psi(x(t))\|^2-2\lambda\beta(t)\langle \nabla\Phi(x(t)),\nabla\Psi(x(t))\rangle\\
=& \ (\delta-\gamma\lambda-1)\langle \nabla\Phi(x(t)),\dot x(t)\rangle+\beta(t)(\delta-\gamma\lambda-1)\langle \nabla\Psi(x(t)),\dot x(t)\rangle
+\delta\dot\beta(t)\Psi(x(t))- \\
&\gamma\|\dot x(t)\|^2-\lambda\|\nabla\Phi(x(t))\|^2-\lambda\beta^2(t)\|\nabla\Psi(x(t))\|^2-2\lambda\beta(t)\langle \nabla\Phi(x(t)),\nabla\Psi(x(t))\rangle.
\end{align*}

Finally we obtain for almost every $t \in [0,+\infty)$
\begin{align}\label{dot-e-delta}\dot E_{\delta}(t) = & \ (\delta-\gamma\lambda-1)\langle \nabla\Phi(x(t)),\dot x(t)\rangle+\beta(t)(\delta-\gamma\lambda-1)\langle \nabla\Psi(x(t)),\dot x(t)\rangle
+\delta\dot\beta(t)\Psi(x(t))\nonumber\\
& -\gamma\|\dot x(t)\|^2-\lambda\|\nabla\Phi(x(t))+\beta(t)\nabla\Psi(x(t))\|^2.
\end{align}

Further, for $z\in S$ and \begin{equation}\label{eps}\varepsilon:=\theta\min\left\{\frac{2\gamma}{3}, \frac{2}{\lambda}\right\}\end{equation}
we consider the functional
\begin{equation}\label{e-eps}E(t)=\frac{1}{\varepsilon} E_{1+\gamma\lambda}(t)+\frac{\gamma}{2}\|x(t)-z\|^2
+\langle \dot x(t)+\lambda\nabla\Phi(x(t))+\lambda\beta(t)\nabla\Psi(x(t)),x(t)-z\rangle \ \forall t >0.\end{equation}
By using \eqref{dot-e-delta} we easily derive  for almost every $t \in [0,+\infty)$
\begin{align}\label{dot-e-eps}\dot E(t) = & \ \frac{1+\gamma\lambda}{\varepsilon}\dot\beta(t)\Psi(x(t))-\frac{\gamma}{\varepsilon}\|\dot x(t)\|^2
-\frac{\lambda}{\varepsilon}\|\nabla\Phi(x(t))+\beta(t)\nabla\Psi(x(t))\|^2 \nonumber\\
& +\gamma\langle \dot x(t),x(t)-z\rangle+\langle-\gamma\dot x(t)-\nabla\Phi(x(t))-\beta(t)\nabla\Psi(x(t)),x(t)-z\rangle\nonumber\\
& +\langle\dot x(t)+\lambda\nabla\Phi(x(t))+\lambda\beta(t)\nabla\Psi(x(t)),\dot x(t)\rangle\nonumber\\
= & \ \frac{1+\gamma\lambda}{\varepsilon}\dot\beta(t)\Psi(x(t))-\langle\nabla\Phi(x(t))+\beta(t)\nabla\Psi(x(t)), x(t)-z\rangle\nonumber\\
& -\left(\frac{\gamma}{\varepsilon}-1\right)\left\|\dot x(t)\right\|^2
 + \lambda\langle\nabla\Phi(x(t))+\beta(t)\nabla\Psi(x(t)),\dot x(t)\rangle\nonumber\\
& -\frac{\lambda}{\varepsilon}\|\nabla\Phi(x(t))+\beta(t)\nabla\Psi(x(t))\|^2\nonumber\\
\leq & \ \frac{1+\gamma\lambda}{\varepsilon}\dot\beta(t)\Psi(x(t))-\langle\nabla\Phi(x(t))+\beta(t)\nabla\Psi(x(t)), x(t)-z\rangle\nonumber\\
& -\left(\frac{\gamma}{\varepsilon}-\frac{3}{2}\right)\left\|\dot x(t)\right\|^2
-\frac{\lambda}{\varepsilon}\left(1-\frac{\varepsilon\lambda}{2}\right)\|\nabla\Phi(x(t))+\beta(t)\nabla\Psi(x(t))\|^2.
\end{align}

The following lemma will play an essential role in the asymptotic analysis of the trajectories.

\begin{lemma}\label{l1-op} Assume that $(H_\Psi)$, $(H_\Phi)$,
$(H_{\beta})$ and $(H)$ hold and let $x:[0,+\infty)\rightarrow{\cal H}$ be the trajectory generated by the dynamical system \eqref{dyn-syst}.
Then for every $z\in S$ the following statements are true:
\begin{enumerate}
 \item[(i)] $\int_0^{+\infty}\beta(t)\Psi(x(t))dt<+\infty$;
 \item[(ii)] $\exists\lim_{t\rightarrow+\infty}\int_0^t\langle \nabla\Phi(z),x(s)-z\rangle ds\in\R$;
 \item[(iii)] $\exists\lim_{t\rightarrow+\infty}\int_0^t\left(\Phi(x(s))-\Phi(z)+\left(1-\frac{k(1+\gamma\lambda)}{\varepsilon}\right)\beta(s)\Psi(x(s))\right)ds\in\R$;
 \item[(iv)] $\dot x$, $\nabla\Phi (x)+\beta\nabla\Psi (x)\in L^2([0,+\infty);{\cal H})$.
 \end{enumerate}
\end{lemma}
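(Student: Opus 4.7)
\emph{Set-up.} The plan is to reduce \eqref{dot-e-eps} to a Lyapunov dissipation via convexity of the data and the growth condition on $\beta$, then combine with the Attouch--Czarnecki penalization inequality supplied by $(H)$ and the continuous quasi-Fej\'er Lemma~\ref{fejer-cont1}. Convexity of $\Phi$ gives $\langle\nabla\Phi(x(t)),x(t)-z\rangle\geq\Phi(x(t))-\Phi(z)$; since $\Psi(z)=0$, convexity of $\Psi$ gives $\langle\nabla\Psi(x(t)),x(t)-z\rangle\geq\Psi(x(t))$; combined with $\dot\beta\leq k\beta$, these reduce \eqref{dot-e-eps} to
\[
\dot E(t) + \bigl(\Phi(x(t))-\Phi(z)\bigr) + c\,\beta(t)\Psi(x(t)) + D(t) \leq 0, \qquad (\ast)
\]
where $c := 1-\tfrac{k(1+\gamma\lambda)}{\varepsilon}\in(0,1)$ is strictly positive by the quantitative bound on $k$ in $(H_\beta)$, and $D(t) := \bigl(\tfrac{\gamma}{\varepsilon}-\tfrac{3}{2}\bigr)\|\dot x(t)\|^2 + \tfrac{\lambda}{\varepsilon}\bigl(1-\tfrac{\varepsilon\lambda}{2}\bigr)\|\nabla\Phi(x(t))+\beta(t)\nabla\Psi(x(t))\|^2$ has strictly positive coefficients by the calibration of $\varepsilon$ in \eqref{eps}. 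A square completion in the cross-term of \eqref{e-eps} (possible since $\varepsilon<\gamma$) together with $\Phi\geq\inf\Phi>-\infty$ and $\Psi\geq 0$ gives $E(t)\geq -M$ for some $M\geq 0$. Since $-\nabla\Phi(z)\in N_{\argmin\Psi}(z)$ by first-order optimality for $z\in S$, Fenchel--Young together with the conicity of $N_{\argmin\Psi}$ yields the Attouch--Czarnecki penalization inequality
\[
\beta(t)\Psi(x(t)) + \langle\nabla\Phi(z), x(t)-z\rangle + R(t) \geq 0, \quad R(t) := \beta(t)\Bigl[\Psi^{*}\!\bigl(\tfrac{-\nabla\Phi(z)}{\beta(t)}\bigr) - \sigma_{\argmin\Psi}\!\bigl(\tfrac{-\nabla\Phi(z)}{\beta(t)}\bigr)\Bigr],
\]
with $R \in L^1([0,+\infty))$ by $(H)$; convexity of $\Phi$ strengthens it to $(\Phi(x(t))-\Phi(z)) + \beta(t)\Psi(x(t)) + R(t) \geq 0$.

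\emph{Proofs of (i) and (iv).} Integrating $(\ast)$ and using $E\geq -M$ gives the master bound $\int_0^t(\Phi(x)-\Phi(z))\,ds + c\int_0^t\beta(s)\Psi(x(s))\,ds + \int_0^t D(s)\,ds \leq E(0)+M$, while the integrated strengthened penalization inequality reads $\int_0^t(\Phi(x)-\Phi(z)) + \int_0^t\beta\Psi + \int_0^t R \geq 0$. A careful weighted combination of these two inequalities, crucially exploiting $1-c>0$ and $R\in L^1$, produces uniform upper bounds on $\int_0^{+\infty}\beta\Psi$ and $\int_0^{+\infty} D$, establishing (i) and (iv) simultaneously.

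\emph{Proofs of (ii) and (iii).} Once (i) and (iv) are in hand, the derivative estimate $\dot E\leq R + (1-c)\beta\Psi - D$ (deduced from $(\ast)$ via the strengthened penalization inequality) admits $R + (1-c)\beta\Psi \in L^1$ as an upper bound, so Lemma~\ref{fejer-cont1} yields $\lim_{t\to+\infty}E(t)\in\R$. For (ii), the integrated penalization inequality makes $A(t) := \int_0^t\beta\Psi + \int_0^t\langle\nabla\Phi(z),x-z\rangle\,ds + \int_0^t R$ non-decreasing, while the master bound combined with (i) makes it bounded above, hence convergent; subtracting the convergent $\int_0^t\beta\Psi$ (by (i)) and $\int_0^t R$ (by $(H)$) yields (ii). For (iii), define $K(t) := E(t) + \int_0^t[(\Phi(x)-\Phi(z))+c\beta\Psi]\,ds + \int_0^t D$, so that $(\ast)$ gives $\dot K\leq 0$; convergence of $E(t)$, items (i), (iv), and the lower bound $\int_0^t[(\Phi(x)-\Phi(z)) + \beta\Psi + R]\geq 0$ make $K$ bounded below, hence $K$ converges; subtracting the convergent $E(t)$ and $\int_0^t D$ gives the existence of $\lim_{t\to+\infty}\int_0^t[(\Phi(x)-\Phi(z)) + c\beta\Psi]\,ds\in\R$.

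The main technical obstacle is the weighted combination step producing (i) and (iv): the master bound and the integrated penalization inequality constrain different weighted combinations of $\int_0^t(\Phi(x)-\Phi(z))$, $\int_0^t\beta\Psi$, and $\int_0^t D$, and isolating uniform bounds on the individual integrals requires both the strict positivity of $1-c$ and the $L^1$ property of $R$; these two requirements are precisely what the quantitative upper bound on $k$ in $(H_\beta)$ and hypothesis $(H)$ are designed to provide, and this is also what forces the specific calibration of $\varepsilon$ in \eqref{eps} so that the coefficients in $D$ remain strictly positive.
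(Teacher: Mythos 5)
Your preparatory material is sound: the reduction of \eqref{dot-e-eps} to $(\ast)$ (which is precisely the paper's \eqref{h-e-1'} with $\widetilde\beta=c\beta$), the positivity of $c$ and of the coefficients in $D$ under $(H_\beta)$ and \eqref{eps}, and the lower bound $E\ge -M$ obtained by completing the square (valid because $\varepsilon\le\theta\,\tfrac{2\gamma}{3}<\gamma$) are all correct --- indeed your direct argument for the boundedness of $E$ from below is simpler than the paper's detour through \eqref{y}, Gronwall and the boundedness of $x$. But the step you yourself call the main obstacle, the ``careful weighted combination'' that is supposed to deliver (i) and (iv), does not work. The two integrated facts you have are $\int_0^t(\Phi(x)-\Phi(z))+c\int_0^t\beta\Psi(x)+\int_0^t D\le E(0)+M$ and $\int_0^t(\Phi(x)-\Phi(z))+\int_0^t\beta\Psi(x)+\int_0^t R\ge 0$, with $c<1$ and $R\in L^1$. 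Because the coefficient of $\int\beta\Psi$ in the lower bound ($1$) exceeds the one in the upper bound ($c$), eliminating $\int(\Phi(x)-\Phi(z))$ only produces $\int_0^t D\le E(0)+M+\|R\|_{L^1}+(1-c)\int_0^t\beta\Psi(x)$ and the vacuous $-(1-c)\int_0^t\beta\Psi(x)\le\text{const}$; no combination bounds $\int\beta\Psi$ or $\int D$. Concretely, the abstract data $\Phi(x(t))-\Phi(z)\equiv-1$, $\beta(t)\Psi(x(t))\equiv1$, $D\equiv0$, $E\equiv0$ satisfy every fact your argument invokes ($(\ast)$, $E\ge-M$, the penalization inequality) while $\int_0^{+\infty}\beta\Psi=+\infty$; so (i) and (iv) cannot follow from those facts alone, and the quantity $1-c>0$ you call ``crucial'' is exactly what makes the naive combination fail rather than succeed.

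What is missing for (i) is the two-scale Attouch--Czarnecki argument the paper uses: since $\ran N_{\argmin\Psi}$ is a cone, the Fenchel--Young estimate can be applied at \emph{any} positive multiple of $\beta$, and the paper applies it both at $\widetilde\beta:=c\beta$ and at $\widetilde\beta/2$, with the \emph{same} linear term $\langle\nabla\Phi(z),x(\cdot)-z\rangle$. Each application gives $\dot E\le(\text{an }L^1\text{ majorant coming from }(H))$ plus the negative of the corresponding integrand, so Lemma \ref{fejer-cont1} yields convergence of both $\int_0^t\big(\widetilde\beta\Psi(x)+\langle\nabla\Phi(z),x-z\rangle\big)ds$ and $\int_0^t\big(\tfrac{\widetilde\beta}{2}\Psi(x)+\langle\nabla\Phi(z),x-z\rangle\big)ds$; subtracting the two limits gives $\int_0^{+\infty}\widetilde\beta\Psi(x)<+\infty$, i.e.\ (i), then (ii) by another subtraction and (iii) by the same quasi-Fej\'er scheme applied to $\Phi(x)-\Phi(z)+\widetilde\beta\Psi(x)$. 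Item (iv) is repairable more cheaply: state your penalization inequality at scale $\widetilde\beta$ instead of $\beta$ (again by conicity, $-\nabla\Phi(z)/c\in\ran N_{\argmin\Psi}$, so the majorant is still $L^1$ by $(H)$); then $(\ast)$ becomes the paper's \eqref{ineq3}, $\dot E\le R_c-D$ with $R_c\in L^1$, and integrating against $E\ge-M$ bounds $\int_0^{+\infty}D$. Your arguments for (ii) and (iii), which you correctly formulate as conditional on (i) and (iv), would then go through essentially as written.
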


\begin{proof} Take an arbitrary $z\in S$. Relying on the convexity of the functions $\Phi$ and $\Psi$, the fact that $z\in \argmin\Psi$ (hence $\Psi(z)=0$)
and the non-negativity of $\beta$ and $\Psi$ we obtain  for every $t \in [0,+\infty)$
\begin{equation}\label{conv}-\langle \nabla \Phi(x(t))+\beta(t)\nabla\Psi(x(t)),x(t)-z\rangle\leq \Phi(z)-\Phi(x(t))-\beta(t)\Psi(x(t)).\end{equation}
From here and \eqref{dot-e-eps} we derive
\begin{align}\label{h-e-1} \dot E(t)\leq & \ \frac{1+\gamma\lambda}{\varepsilon}\dot\beta(t)\Psi(x(t))
+\Phi(z)-\Phi(x(t))-\beta(t)\Psi(x(t))\nonumber\\
&-\left(\frac{\gamma}{\varepsilon}-\frac{3}{2}\right)\left\|\dot x(t)\right\|^2
-\frac{\lambda}{\varepsilon}\left(1-\frac{\varepsilon\lambda}{2}\right)\|\nabla\Phi(x(t))+\beta(t)\nabla\Psi(x(t))\|^2
\end{align}
for almost every $t \in [0,+\infty)$.

By using the growth condition on $\beta$ we get
\begin{align}\label{h-e-1'} \dot E(t)\leq &
 \ \left(k\frac{1+\gamma\lambda}{\varepsilon}-1\right)\beta(t)\Psi(x(t))+\Phi(z)-\Phi(x(t)) \nonumber\\
& -\left(\frac{\gamma}{\varepsilon}-\frac{3}{2}\right)\left\|\dot x(t)\right\|^2
-\frac{\lambda}{\varepsilon}\left(1-\frac{\varepsilon\lambda}{2}\right)\|\nabla\Phi(x(t))+\beta(t)\nabla\Psi(x(t))\|^2\nonumber\\
= & -\widetilde\beta(t)\Psi(x(t))+\Phi(z)-\Phi(x(t))\nonumber\\
& -\left(\frac{\gamma}{\varepsilon}-\frac{3}{2}\right)\left\|\dot x(t)\right\|^2
-\frac{\lambda}{\varepsilon}\left(1-\frac{\varepsilon\lambda}{2}\right)\|\nabla\Phi(x(t))+\beta(t)\nabla\Psi(x(t))\|^2,
\end{align}
where \begin{equation}\label{def-beta-tilde}\widetilde\beta(t):=\left(1-k\frac{1+\gamma\lambda}{\varepsilon}\right)\beta(t)>0,\end{equation}
due to $(H_{\beta})$ and \eqref{eps}.

Furthermore, \begin{equation}\label{phi-conv}\Phi(z)-\Phi(x(t))\leq \langle -\nabla \Phi(z),x(t)-z\rangle ,\end{equation}
thus
\begin{align}\label{h-e-1''} \dot E(t)\leq &
  -\widetilde\beta(t)\Psi(x(t))-\langle \nabla \Phi(z),x(t)-z\rangle\nonumber\\
& -\left(\frac{\gamma}{\varepsilon}-\frac{3}{2}\right)\left\|\dot x(t)\right\|^2
-\frac{\lambda}{\varepsilon}\left(1-\frac{\varepsilon\lambda}{2}\right)\|\nabla\Phi(x(t))+\beta(t)\nabla\Psi(x(t))\|^2
\end{align}
for almost every $t \in [0,+\infty)$. Since $z$ is an optimal solution of \eqref{opt-intr}, the first order optimality condition
delivers \begin{equation}\label{zero-in-partial}0\in\partial (\Phi+\delta_{\argmin \Psi})(z)=\nabla\Phi(z)+ N_{\argmin \Psi}(z),\end{equation}
hence \begin{equation}\label {opt-cond} -\nabla \Phi(z)\in N_{\argmin \Psi}(z) \subseteq \ran N_{\argmin \Psi}.\end{equation}
From here and by using the Young-Fenchel inequality we obtain for every $t \in [0,+\infty)$
\begin{align}\label{ineq-conj}-\widetilde\beta(t)\Psi(x(t))-\langle \nabla \Phi(z),x(t)-z\rangle & =
\widetilde\beta(t)\left(-\Psi(x(t))-\left\langle \frac{\nabla \Phi(z)}{\widetilde\beta(t)},x(t)-z\right\rangle\right)\nonumber\\
& = \widetilde\beta(t)\left(-\Psi(x(t))+\left\langle \frac{-\nabla \Phi(z)}{\widetilde\beta(t)},x(t)\right\rangle-\sigma_{\argmin \Psi}\left(\frac{-\nabla \Phi(z)}{\widetilde\beta(t)}\right)\right)\nonumber\\
& \leq \widetilde\beta(t)\left(\Psi^*\left(\frac{-\nabla \Phi(z)}{\widetilde\beta(t)}\right)-\sigma_{\argmin \Psi}\left(\frac{-\nabla \Phi(z)}{\widetilde\beta(t)}\right)\right).\end{align}

Thus, from \eqref{h-e-1''} and \eqref{ineq-conj} we obtain for almost every $t \in [0,+\infty)$
\begin{align}\label{ineq3}\dot E(t)\leq & \
\widetilde\beta(t)\left(\Psi^*\left(\frac{-\nabla\Phi(z)}{\widetilde\beta(t)}\right)-
\sigma_{\argmin \Psi}\left(\frac{-\nabla\Phi(z)}{\widetilde\beta(t)}\right)\right)\nonumber\\
& -\left(\frac{\gamma}{\varepsilon}-\frac{3}{2}\right)\left\|\dot x(t)\right\|^2
-\frac{\lambda}{\varepsilon}\left(1-\frac{\varepsilon\lambda}{2}\right)\|\nabla\Phi(x(t))+\beta(t)\nabla\Psi(x(t))\|^2 \\
\leq & \ \widetilde\beta(t)\left(\Psi^*\left(\frac{-\nabla\Phi(z)}{\widetilde\beta(t)}\right)-
\sigma_{\argmin \Psi}\left(\frac{-\nabla\Phi(z)}{\widetilde\beta(t)}\right)\right)\label{l1},
\end{align}
where the last inequality follows from \eqref{eps} and the fact that $\theta\in(0,1)$.

By integrating the last inequality from $0$ to $T$ ($T>0$) and by taking into account $(H)$, \eqref{e-eps}, \eqref{e-delta}
and the fact that $\Phi$ and $\Psi$ are bounded from below, it yields that there exists $M>0$ such that
\begin{align}\label{y} \frac{\gamma}{2}&\|x(T)-z\|^2+\langle \dot x(T)+\lambda\nabla\Phi(x(T))+\lambda\beta(T)\nabla\Psi(x(T)),x(T)-z\rangle\nonumber \\
& +\frac{1}{2\varepsilon}\left\|\dot x(T)+\lambda\nabla\Phi(x(T))+\lambda\beta(T)\nabla\Psi(x(T))\right\|^2\leq M \ \forall T\geq 0.
\end{align}
Combining this with \eqref{conv} and the fact that $\Phi$ and $\Psi$ are bounded from below, one can easily see that there exists $M'>0$ such that
$$\frac{d}{dT}\left(\frac{1}{2}\|x(T)-z\|^2\right)+\gamma\left(\frac{1}{2}\|x(T)-z\|^2\right)\leq M' \ \forall T\geq 0.$$
A direct application of the Gronwall Lemma implies that \begin{equation}\label{x-b}x\mbox{ is bounded} .\end{equation}
Further, this yields via \eqref{y} that
\begin{equation}\label{bound}\dot x+\lambda\nabla\Phi(x)+\lambda\beta\nabla\Psi(x)\mbox{ is bounded}.\end{equation}
From \eqref{x-b}, \eqref{bound}, \eqref{e-eps} and\eqref{e-delta} we conclude that
\begin{equation}\label{e-eps-bound}E\mbox{ is bounded from below}.\end{equation}

Moreover, from \eqref{h-e-1'}, \eqref{ineq-conj} and \eqref{phi-conv} we obtain for almost every $t \in [0,+\infty)$
\begin{align} & \dot E(t)+
\widetilde\beta(t)\left(-\Psi^*\left(\frac{-\nabla\Phi(z)}{\widetilde\beta(t)}\right)+\sigma_{\argmin \Psi}\left(\frac{-\nabla\Phi(z)}{\widetilde\beta(t)}\right)\right)\nonumber\\
\leq & \ \dot E(t) +\widetilde\beta(t)\Psi(x(t))+\langle \nabla\Phi(z),x(t)-z\rangle\nonumber\\
\leq &\ \dot E(t)
+\Phi(x(t))-\Phi(z)+\widetilde\beta(t)\Psi(x(t))\nonumber\\\leq & \ 0.\label{ineq2'}
\end{align}

(i) Consider the function $F:[0,+\infty)\rightarrow\R$ defined by
$$F(t)=\int_0^t\left(-\widetilde\beta(s)\Psi(x(s))-\langle\nabla\Phi(z),x(s)-z\rangle\right)ds \ \forall t\geq 0.$$

Making again use of (see \eqref{ineq2'})
$$-\widetilde\beta(s)\Psi(x(s))-\langle\nabla\Phi(z),x(s)-z\rangle\geq \dot E(s) \ \forall s \in [0,+\infty)$$ and \eqref{e-eps-bound}, we easily derive that
$F$ is bounded from below. Moreover, from \eqref{ineq-conj} it follows that for almost every $t \in [0,+\infty)$
$$\dot F(t)\leq \widetilde\beta(t)\left(\Psi^*\left(\frac{-\nabla\Phi(z)}{\widetilde\beta(t)}\right)
-\sigma_{\argmin \Psi}\left(\frac{-\nabla\Phi(z)}{\widetilde\beta(t)}\right)\right).$$
Notice that according to (H), the function on the right-hand side of this inequality is $L^1$-integrable on $[0,+\infty)$,
hence a direct application of
Lemma \ref{fejer-cont1} yields that $\lim_{t\rightarrow+\infty}F(t)$ exists and is a real number. Thus
\begin{equation}\label{b}\exists \lim_{t\rightarrow+\infty}\int_0^t\left(\widetilde\beta(s)\Psi(s)+\langle \nabla \Phi(z),x(s)-z\rangle\right)ds\in\R.\end{equation}

Since $\Psi\geq 0$, we obtain for every $t \in [0,+\infty)$
$$\widetilde\beta(t)\Psi(x(t))+\langle \nabla \Phi(z),x(t)-z\rangle\geq \frac{\widetilde\beta(t)}{2}\Psi(x(t))+\langle \nabla \Phi(z),x(t)-z\rangle$$ and from here,
similarly to \eqref{ineq-conj},
$$-\frac{\widetilde\beta(t)}{2}\Psi(x(t))-\langle \nabla \Phi(z),x(t)-z\rangle\leq
\frac{\widetilde\beta(t)}{2}\left(\Psi^*\left(\frac{-2\nabla \Phi(z)}{\widetilde\beta(t)}\right)-\sigma_{\argmin \Psi}\left(\frac{-2\nabla \Phi(z)}{\widetilde\beta(t)}\right)\right).$$

Thus, for almost every $t \in [0,+\infty)$ it holds
\begin{align*} & \dot E(t)+
\frac{\widetilde\beta(t)}{2}\left(-\Psi^*\left(\frac{-2\nabla \Phi(z)}{\widetilde\beta(t)}\right)+\sigma_{\argmin \Psi}\left(\frac{-2\nabla\Phi(z)}{\widetilde\beta(t)}\right)\right)\\
\leq & \ \dot E(t)+\frac{\widetilde\beta(t)}{2}\Psi(x(t))+\langle \nabla \Phi(z),x(t)-z\rangle\\
\leq & \ \dot E(t) +\widetilde\beta(t)\Psi(x(t))+\langle \nabla \Phi(z),x(t)-z\rangle\\
\leq & \ 0.
\end{align*}
Following the same technique as in the proof of \eqref{b}, it yields that
\begin{equation}\label{b2}\exists \lim_{t\rightarrow+\infty}\int_0^t\left(\frac{\widetilde\beta(s)}{2}\Psi(s)+\langle \nabla \Phi(z),x(s)-z\rangle\right)ds\in\R.\end{equation}
Finally, from \eqref{b}, \eqref{b2} and \eqref{def-beta-tilde} we obtain (i).

(ii) Follows from (i), \eqref{b} and \eqref{def-beta-tilde}.

(iii) Follows from \eqref{ineq2'} and \eqref{ineq-conj}, by following the same arguments as for proving statement \eqref{b}.

(iv) Follows by integrating \eqref{ineq3}, by taking into account $(H)$, \eqref{e-eps-bound}, \eqref{eps} and the fact that $\theta\in(0,1)$.
\end{proof}

\begin{remark} The assumption $\lim_{t\rightarrow+\infty}\beta(t)=+\infty$ has not bee used in the above
proof. However, it will play an important role in the arguments used below.
\end{remark}

For the asymptotic analysis of the trajectories generated by the dynamical system \eqref{dyn-syst}, the continuous version of the Opial Lemma that we state as follows will be crucial.

\begin{lemma}\label{opial-cont} Let $S_\infty$ be a nonempty subset of the real Hilbert space ${\cal H}$ and $x:[0,+\infty)\rightarrow{\cal H}$
a given function. Assume that
\begin{enumerate}
\item [(i)] $\lim_{t\rightarrow+\infty}\|x(t)-z\|$ exists for every $z \in S_\infty$;
\item[(ii)] every weak limit point of $x$ belongs to $S_\infty$.
\end{enumerate}
Then there exists $x_{\infty}\in S_\infty$ such that $x(t)$ converges weakly to $x_{\infty}$ as $t\rightarrow+\infty$.
\end{lemma}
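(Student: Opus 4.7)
The plan is to mimic the classical Opial argument adapted to a continuous parameter: first I would establish boundedness of $x$; then identify a nonempty set of weak cluster points of $x(\cdot)$ at infinity, contained in $S_\infty$ by hypothesis (ii); then show this set is a singleton by means of the Opial identity; and finally upgrade ``unique weak cluster point'' to ``weak convergence of the entire trajectory''.

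To begin, fix any $z\in S_\infty$. Hypothesis (i) gives that $t\mapsto\|x(t)-z\|$ admits a finite limit at $+\infty$, hence is bounded on $[0,+\infty)$, so $x$ itself is bounded in ${\cal H}$. Since ${\cal H}$ is reflexive, the Banach--Alaoglu theorem guarantees that for every sequence $t_n\to+\infty$ some subsequence $(t_{n_k})$ exists along which $x(t_{n_k})$ converges weakly; thus the set of weak cluster points of $x(\cdot)$ at infinity is nonempty, and by hypothesis (ii) it is a subset of $S_\infty$.

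Next I would establish uniqueness of the weak cluster point via the inner-product identity underlying Opial's lemma. Suppose $x_1,x_2\in S_\infty$ are two weak cluster points, attained along sequences $(s_n)$ and $(\tau_n)$ tending to $+\infty$. Applying (i) with $z=x_1$ and $z=x_2$, both $t\mapsto\|x(t)-x_1\|^2$ and $t\mapsto\|x(t)-x_2\|^2$ have limits at $+\infty$. Expanding
$$\|x(t)-x_1\|^2-\|x(t)-x_2\|^2 = 2\langle x(t),\,x_2-x_1\rangle+\|x_1\|^2-\|x_2\|^2,$$
the scalar function $t\mapsto\langle x(t),x_2-x_1\rangle$ must have a limit as $t\to+\infty$. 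Evaluating that limit along $s_n$ yields $\langle x_1,x_2-x_1\rangle$, while evaluating along $\tau_n$ yields $\langle x_2,x_2-x_1\rangle$; equating them gives $\|x_2-x_1\|^2=0$, so $x_1=x_2=:x_\infty$.

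The step I expect to be the most delicate is the final one: inferring weak convergence of the whole trajectory $x(t)$ to $x_\infty$ from uniqueness of the cluster point. I would argue by contradiction: if $x(t)\not\rightharpoonup x_\infty$, there exist $v\in{\cal H}$, $\varepsilon>0$ and a sequence $t_n\to+\infty$ with $|\langle x(t_n)-x_\infty,v\rangle|\geq\varepsilon$; boundedness of $(x(t_n))$ combined with reflexivity permits extraction of a weakly convergent subsequence, whose limit is a weak cluster point of $x$ distinct from $x_\infty$, contradicting the uniqueness just established. This subsequence-extraction argument is the only point at which the continuous time parameter demands any real care; the remainder of the proof is elementary algebra in the Hilbert inner product.
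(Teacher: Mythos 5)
Your proof is correct. Note that the paper states this continuous version of the Opial Lemma without proof, treating it as a known result, so there is no internal argument to compare against; what you give is the standard Opial argument (boundedness, existence of weak cluster points via reflexivity, uniqueness through the identity $\|x(t)-x_1\|^2-\|x(t)-x_2\|^2=2\langle x(t),x_2-x_1\rangle+\|x_1\|^2-\|x_2\|^2$, and the subsequence contradiction to pass from a unique cluster point to weak convergence of the whole trajectory), and all the steps go through. One cosmetic point: since $x$ is only ``a given function'' in the statement, hypothesis (i) yields boundedness of $t\mapsto\|x(t)-z\|$ only on some interval $[T,+\infty)$ rather than on all of $[0,+\infty)$, but this is all you use, as every extraction is along sequences tending to $+\infty$.
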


We state now the main theorem of the paper.

\begin{theorem}\label{th-nonerg-conv} Assume that $(H_\Psi)$, $(H_\Phi)$,
$(H_{\beta})$ and $(H)$ hold and let
$x:[0,+\infty)\rightarrow{\cal H}$ be the trajectory generated by the dynamical system \eqref{dyn-syst}.
Then the following statements are true:
\begin{enumerate}
 \item [(i)] $\Phi(x(t))$ converges to the optimal objective value of \eqref{opt-intr} as $t\rightarrow+\infty$;
 \item[(ii)] $\lim_{t\rightarrow+\infty}\beta(t)\Psi(x(t))=\lim_{t\rightarrow+\infty}\Psi(x(t))=0$;
 \item[(iii)] $\int_0^{+\infty}\beta(t)\Psi(x(t))dt<+\infty$;
 \item[(iv)] $\dot x$, $\nabla\Phi (x)+\beta\nabla\Psi (x)\in L^2([0,+\infty);{\cal H})$;
 \item [(v)] $\lim_{t\rightarrow+\infty}\big(\dot x(t)+\lambda\nabla\Phi (x(t))+\lambda\beta(t)\nabla\Psi (x(t))\big)=0$;
 \item[(vi)] there exists $x_{\infty}\in S$ such that $x(t)$ converges weakly to $x_{\infty}$ as $t\rightarrow+\infty$.
\end{enumerate}
\end{theorem}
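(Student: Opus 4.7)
Parts (iii) and (iv) are immediate from Lemma~\ref{l1-op}(i) and (iv), respectively. The remaining parts I would prove in the order (v) $\to$ (ii) $\to$ (i) $\to$ (vi), since each statement feeds into the next.

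For (v), I set $u(t):=\dot x(t)+\lambda\nabla\Phi(x(t))+\lambda\beta(t)\nabla\Psi(x(t))$. Grouping the Hessian-driven terms in \eqref{dyn-syst} shows that $\dot u(t)=-\gamma\dot x(t)-\nabla\Phi(x(t))-\beta(t)\nabla\Psi(x(t))$. By Lemma~\ref{l1-op}(iv), both $\dot x$ and $\nabla\Phi(x)+\beta\nabla\Psi(x)$ belong to $L^2([0,+\infty);{\cal H})$, hence so do $u$ and $\dot u$. Therefore $\tfrac{d}{dt}\|u(t)\|^2=2\langle u(t),\dot u(t)\rangle$ lies in $L^1([0,+\infty))$ by Cauchy--Schwarz, so $\|u(t)\|^2$ has a limit at $+\infty$; since $\|u\|^2\in L^1$, this limit must be $0$.

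For (ii), Lemma~\ref{fejer-cont1} applied to \eqref{dot-e-delta} with $\delta=1+\gamma\lambda$ shows that $E_{1+\gamma\lambda}$ has a limit at $+\infty$; combined with (v), this gives $\Phi(x(t))+\beta(t)\Psi(x(t))\to L\in\R$, and in particular $g(t):=\beta(t)\Psi(x(t))$ is bounded, because $\Phi(x(t))$ is bounded ($x$ is bounded by \eqref{x-b} and $\nabla\Phi$ is Lipschitz). To pass from $g\in L^1$ (Lemma~\ref{l1-op}(i)) to $g(t)\to 0$, I bound $\dot g$ from above. Substituting $\dot x=u-\lambda(\nabla\Phi(x)+\beta\nabla\Psi(x))$ yields
\[
\dot g=\dot\beta\,\Psi(x)+\beta\langle\nabla\Psi(x),u\rangle-\lambda\beta\langle\nabla\Psi(x),\nabla\Phi(x)\rangle-\lambda\|\beta\nabla\Psi(x)\|^2.
\]
Using $\dot\beta\leq k\beta$, the boundedness of $u$ and $\nabla\Phi(x)$, and the elementary inequality $ay-by^2\leq a^2/(4b)$ applied to $y:=\|\beta\nabla\Psi(x)\|\geq 0$, one deduces $\dot g\leq kg+M$ for some $M>0$. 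Since $g\geq 0$ is bounded and lies in $L^1$, a standard lemma (a nonnegative absolutely continuous $L^1$ function with derivative bounded above tends to $0$) gives $g(t)\to 0$. As $\beta(t)\to+\infty$, $\Psi(x(t))\to 0$ follows.

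For (i), writing $\Phi(x(t))=(\Phi(x(t))+\beta(t)\Psi(x(t)))-\beta(t)\Psi(x(t))$ and applying (ii) together with the limit of $\Phi+\beta\Psi$ gives $\Phi(x(t))\to L$; Lemma~\ref{l1-op}(iii) combined with $\beta\Psi\in L^1$ yields the convergence of $\int_0^t(\Phi(x(s))-\Phi(z))\,ds$, which forces $L=\Phi(z)=\inf_{\argmin\Psi}\Phi$. For (vi), I verify the hypotheses of the Opial Lemma~\ref{opial-cont} with $S_\infty=S$: Lemma~\ref{fejer-cont1} applied to \eqref{ineq3} and $(H)$ shows that the functional $E$ in \eqref{e-eps} has a limit, and since $E_{1+\gamma\lambda}$ has a limit and $\langle u,x-z\rangle\to 0$ (from $u\to 0$ and $x$ bounded), $\|x(t)-z\|^2$ has a limit for every $z\in S$; any weak limit point $\bar x$ of the trajectory satisfies $\Psi(\bar x)=0$ and $\Phi(\bar x)\leq\Phi^*$ by weak lower semicontinuity of $\Psi$ and $\Phi$ together with (ii) and (i), whence $\bar x\in S$. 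The most delicate step is the upper bound on $\dot g$ in the proof of (ii): a priori $\beta\nabla\Psi(x)$ is not known to be in $L^2$ (only the sum $\nabla\Phi(x)+\beta\nabla\Psi(x)$ is), and the coercive term $-\lambda\|\beta\nabla\Psi(x)\|^2$ that surfaces upon eliminating $\dot x$ via the ODE is precisely what absorbs the potentially unbounded cross terms.
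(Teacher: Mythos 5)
Your proposal is correct, but it reorganizes the argument in a way that differs genuinely from the paper at several key points, even though both rely on the same infrastructure (Lemma~\ref{l1-op}, Lemma~\ref{fejer-cont1}, the energy functionals \eqref{e-delta} and \eqref{e-eps}, and Opial's Lemma~\ref{opial-cont}). The paper obtains the existence of $\lim_{t\to+\infty}\big(\Phi(x(t))+\beta(t)\Psi(x(t))\big)$ by the two-parameter trick $\delta_{1,2}=(1\pm\sqrt{\lambda\gamma})^2$, for which $\dot E_\delta$ in \eqref{dot-e-delta} becomes $\delta\dot\beta\Psi(x)$ minus a perfect square; subtracting the two limits cancels the quadratic term in $u=\dot x+\lambda\nabla\Phi(x)+\lambda\beta\nabla\Psi(x)$. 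It then gets $\Psi(x(t))\to0$ by dividing by $\beta(t)$, proves (i) via $\liminf\Phi(x(t))\ge\Phi(z)$ borrowed from \cite[Lemma 3.4]{att-cza-10} plus a contradiction/integration argument based on Lemma~\ref{l1-op}(i),(iii), deduces $\beta\Psi\to0$ from (i), and only afterwards obtains (v) from the limit of $E_{1+\lambda\gamma}$ together with (iv). You instead prove (v) first and independently, from $u,\dot u\in L^2$ (a correct and arguably cleaner observation, since $\dot u=-\gamma\dot x-\nabla\Phi(x)-\beta\nabla\Psi(x)$ by the system), which lets you use the single energy $E_{1+\gamma\lambda}$ to get the limit of $\Phi+\beta\Psi$, bypassing the $\delta_{1,2}$ device; you then establish $\beta(t)\Psi(x(t))\to0$ directly through the differential inequality $\dot g\le kg+M$ for $g=\beta\Psi(x)$ (with the term $-\lambda\|\beta\nabla\Psi(x)\|^2$ absorbing the cross terms, as you rightly flag) combined with the standard fact that a nonnegative, integrable, locally absolutely continuous function whose derivative is bounded above tends to zero; and you identify the limit of $\Phi(x(t))$ as $\Phi(z)$ from the convergence of $\int_0^t(\Phi(x(s))-\Phi(z))\,ds$ (Lemma~\ref{l1-op}(i),(iii)), which removes the appeal to the external lemma of Attouch--Czarnecki and the contradiction argument. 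The trade-off: your route is more self-contained and gives a cleaner logical order, at the price of the extra computation of $\dot g$ and the auxiliary vanishing lemma (which you should prove or cite explicitly); the paper's route gets $\Psi\to0$ and $\beta\Psi\to0$ almost for free from \eqref{lim-phi-psi} and (i), at the price of the less transparent $\delta_{1,2}$ cancellation and the external reference. Two small points to make explicit in a final write-up: when applying Lemma~\ref{fejer-cont1} to \eqref{ineq3} you need that $E$ is bounded from below, which is \eqref{e-eps-bound}; and the boundedness of $u$ and of $\nabla\Phi(x)$ used in bounding $\dot g$ comes from \eqref{bound} and \eqref{x-b} together with the Lipschitz continuity of $\nabla\Phi$. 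Both are available from the proof of Lemma~\ref{l1-op}, so these are presentational, not mathematical, gaps.
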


\begin{proof} Fix an arbitrary $z\in S$ and consider the energy functional defined in \eqref{e-delta} for $$\delta_1=(1+\sqrt{\lambda\gamma})^2=1+\lambda\gamma+2\sqrt{\lambda\gamma}.$$
A simple computation (see \eqref{dot-e-delta}) shows that
$$\dot E_{\delta_1}(t)=(1+\sqrt{\lambda\gamma})^2\dot\beta(t)\Psi(x(t))-\left\|\sqrt{\gamma}\dot x(t)-\sqrt{\lambda}(\nabla\Phi (x(t))+\beta(t)\nabla\Psi (x(t))\right\|^2$$
Taking into account the growth condition on $\beta$, Lemma \ref{l1-op}(i) and the fact that $E_{\delta_1}$ is bounded from below, we obtain
from Lemma \ref{fejer-cont1} that
\begin{equation}\label{e1}\exists\lim_{t\rightarrow+\infty}E_{\delta_1}(t)\in\R.\end{equation}
Similarly, consider $$\delta_2=(1-\sqrt{\lambda\gamma})^2=1+\lambda\gamma-2\sqrt{\lambda\gamma}.$$
We have
$$\dot E_{\delta_2}(t)=(1-\sqrt{\lambda\gamma})^2\dot\beta(t)\Psi(x(t))-\left\|\sqrt{\gamma}\dot x(t)+\sqrt{\lambda}(\nabla\Phi (x(t))+\beta(t)\nabla\Psi (x(t))\right\|^2$$
and \begin{equation}\label{e2}\exists\lim_{t\rightarrow+\infty}E_{\delta_2}(t)\in\R.\end{equation}
From \eqref{e1} and \eqref{e2} we get $$\exists\lim_{t\rightarrow+\infty}\big(E_{\delta_1}(t)-E_{\delta_2}(t)\big)\in\R.$$
This implies by the definition of the energy functional that
\begin{equation}\label{lim-phi-psi}\exists\lim_{t\rightarrow+\infty}\big(\Phi(x(t))+\beta(t)\Psi(x(t))\big)\in\R.\end{equation}
From here we deduce \begin{equation}\label{psi-0}\lim_{t\rightarrow+\infty}\Psi(x(t))=0.\end{equation}
Indeed, since $\Phi$ is bounded from below and, by taking into account the inequality
$$\Phi(x(t))\leq \Phi(z)+\langle \nabla\Phi(x(t)),x(t)-z\rangle,$$
the fact that $x$ is bounded and $\lim_{t\rightarrow+\infty}\beta(t)=+\infty$, we have
$$\lim_{t\rightarrow+\infty}\frac{\Phi(x(t))}{\beta(t)}=0.$$ The statement \eqref{psi-0} follows now from the last relation and \eqref{lim-phi-psi}.

In the following we appeal to \cite[Lemma 3.4]{att-cza-10} for the inequality
\begin{equation}\label{liminf-phi}\liminf_{t\rightarrow+\infty}\Phi(x(t))\geq\Phi(z).\end{equation} Indeed,
this was obtained in \cite[Lemma 3.4]{att-cza-10} for a first order dynamical system.
However, the statement remains true for \eqref{dyn-syst} too, since a careful look at the proof of Lemma 3.4 in \cite{att-cza-10} reveals
that the main arguments used to obtain this conclusion are the statements in Lemma \ref{l1-op}(ii), the fact that the trajectory $x$ is bounded,
the weak lower semicontinuity of $\Psi$, equation \eqref{psi-0}, the inequality \eqref{phi-conv} and relation \eqref{opt-cond}.

From \eqref{liminf-phi} and \eqref{lim-phi-psi} we have $\lim_{t\rightarrow+\infty}\big(\Phi(x(t))+\beta(t)\Psi(x(t))\big)\geq \Phi(z)$.
We claim that \begin{equation}\label{lim-e}\lim_{t\rightarrow+\infty}\big(\Phi(x(t))+\beta(t)\Psi(x(t))\big)= \Phi(z).\end{equation}

Let us assume that $\lim_{t\rightarrow+\infty}\big(\Phi(x(t))+\beta(t)\Psi(x(t))\big)> \Phi(z)$. Then there exist $\eta>0$ and $t_0\geq 0$ such that
for every $t\geq t_0$ we have
\begin{equation}\Phi(x(t))+\beta(t)\Psi(x(t))>\Phi(z)+\eta.\end{equation}
Hence, for every $t\geq t_0$
\begin{equation}\eta<\Phi(x(t))-\Phi(z)+\left(1-\frac{k(1+\gamma\lambda)}{\varepsilon}\right)\beta(t)\Psi(x(t))+
\frac{k(1+\gamma\lambda)}{\varepsilon}\beta(t)\Psi(x(t)).\end{equation}
Integrating the last inequality and taking into account Lemma \ref{l1-op}(i) and (iii) we obtain a contradiction. In conclusion, \eqref{lim-e} holds.

Since $\Phi(x(t))\leq \Phi(x(t))+\beta(t)\Psi(x(t))$ for every $t \in [0,+\infty)$, according to \eqref{lim-e} it follows that $\limsup_{t\rightarrow+\infty}\Phi(x(t))\leq\Phi(z)$,
which combined with \eqref{liminf-phi} imply (i).

(ii) Follows from \eqref{psi-0}, \eqref{lim-e} and the fact that $\lim_{t\rightarrow+\infty}\Phi(x(t))=\Phi(z)$.

(iii)-(iv) The statements have been proved in Lemma \ref{l1-op}.

(v) From \eqref{dot-e-delta}, Lemma \ref{l1-op}(i) and Lemma \ref{fejer-cont1}, we derive that
\begin{equation}\label{exists-lim-e-lg}\exists\lim_{t\rightarrow+\infty}E_{1+\lambda\gamma}(t)\in\R.\end{equation}
Combining this with \eqref{e-delta} and \eqref{lim-phi-psi}, we obtain that
$$\exists\lim_{t\rightarrow+\infty}\left\|\dot x(t)+\lambda\nabla\Phi (x(t))+\lambda\beta(t)\nabla\Psi (x(t))\right\|\in\R.$$
The statement follows now from (iv).

(vi) This will be a consequence of the Opial Lemma.

Let us check the first statement in Lemma \ref{opial-cont}. From \eqref{l1}, $(H)$, \eqref{e-eps-bound} and Lemma \ref{fejer-cont1} we obtain
\begin{equation}\label{exists-lim-e}\exists\lim_{t\rightarrow+\infty}E(t)\in\R.\end{equation}
Further, by using \eqref{e-eps}, \eqref{exists-lim-e}, \eqref{exists-lim-e-lg}, \eqref{x-b} and (v) we conclude that
\begin{equation}\label{op1}\exists\lim_{t\rightarrow+\infty}\|x(t)-z\|\in\R.\end{equation} Since $z\in S$ was arbitrary chosen, the first
statement of the Opial Lemma is true.

We prove now that the second condition in Lemma \ref{opial-cont} is fulfilled, too.
Let $(t_n)_{n\in\N}$ be a sequence of positive numbers such that $\lim_{n\rightarrow+\infty}t_n=+\infty$ and $x(t_n)$ converges weakly
to $x_{\infty}$ as $n\rightarrow+\infty$. By using the weak lower semicontinuity of
$\Psi$ and (ii) we obtain
$$0\leq \Psi(x_{\infty})\leq\liminf_{n\rightarrow+\infty}\Psi(x(t_n))=0,$$
hence $x_{\infty}\in \argmin\Psi$. Moreover, the weak lower semicontinuity of
$\Phi$ and (i) yield
$$\Phi(x_{\infty})\leq\liminf_{n\rightarrow+\infty}\Phi(x(t_n))=\Phi(z),$$
thus $x_{\infty}\in S$.
Therefore, both conditions of Lemma \ref{opial-cont} are fulfilled and the conclusion follows.
\end{proof}

Finally, we consider the situation when the objective function of \eqref{opt-intr} is strongly convex. In this case,
the trajectory generated by \eqref{dyn-syst} converges strongly to the unique optimal solution of \eqref{opt-intr}.

\begin{theorem}\label{str-mon} Assume that $(H_\Psi)$, $(H_\Phi)$,
$(H_{\beta})$ and $(H)$ hold and let $x:[0,+\infty)\rightarrow{\cal H}$ be the trajectory generated by the dynamical system \eqref{dyn-syst}.
If $\Phi$ is strongly convex, then $x(t)$ converges strongly to the unique optimal solution of \eqref{opt-intr} as $t\rightarrow+\infty$.
\end{theorem}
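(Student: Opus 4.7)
The plan is to strengthen the energy estimate of Lemma \ref{l1-op} using the strong convexity of $\Phi$, in order to obtain an extra quadratic term $\|x(t)-z\|^2$ inside the integrable bound for $\dot E$, and then combine this integrability with the existence of $\lim_{t \to +\infty} \|x(t)-z\|$ already proved in Theorem \ref{th-nonerg-conv}(vi) to force the limit to be zero.

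First I would observe that strong convexity of $\Phi$ (say with modulus $\mu > 0$) implies $S$ is a singleton $\{z\}$: if $z_1, z_2 \in S$ then $\tfrac{z_1+z_2}{2} \in \argmin \Psi$ and $\Phi\big(\tfrac{z_1+z_2}{2}\big) < \tfrac{1}{2}(\Phi(z_1)+\Phi(z_2)) = \min_{\argmin \Psi} \Phi$, a contradiction. So the unique optimal solution $z$ exists and belongs to $S$, and Theorem \ref{th-nonerg-conv}(vi) already tells us $x(t) \rightharpoonup z$ weakly.

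Next, the key replacement: instead of the convexity inequality \eqref{phi-conv}, strong convexity yields
$$\Phi(z) - \Phi(x(t)) \leq \langle -\nabla \Phi(z), x(t)-z\rangle - \frac{\mu}{2}\|x(t)-z\|^2 \ \forall t \in [0,+\infty).$$
Inserting this into the computation leading from \eqref{h-e-1'} to \eqref{h-e-1''} and then into \eqref{ineq3} produces the sharper bound, valid for almost every $t \in [0,+\infty)$,
\begin{align*}
\dot E(t) + \frac{\mu}{2}\|x(t)-z\|^2 \leq {} & \widetilde\beta(t)\left(\Psi^*\!\!\left(\frac{-\nabla\Phi(z)}{\widetilde\beta(t)}\right) - \sigma_{\argmin\Psi}\!\!\left(\frac{-\nabla\Phi(z)}{\widetilde\beta(t)}\right)\right) \\
& - \left(\frac{\gamma}{\varepsilon}-\frac{3}{2}\right)\|\dot x(t)\|^2 - \frac{\lambda}{\varepsilon}\left(1 - \frac{\varepsilon\lambda}{2}\right)\|\nabla\Phi(x(t))+\beta(t)\nabla\Psi(x(t))\|^2.
\end{align*}
Integrating from $0$ to $T$, using $(H)$, the coefficient conditions coming from \eqref{eps} and $\theta \in (0,1)$, and the lower-boundedness of $E$ established in \eqref{e-eps-bound}, I obtain
$$\int_0^{+\infty} \|x(t)-z\|^2 \, dt < +\infty.$$

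To finish, I would combine this integrability with \eqref{op1}, which guarantees that $\lim_{t\to +\infty} \|x(t)-z\|$ exists as a real number. If this limit were strictly positive, then $\|x(t)-z\|^2$ would be bounded below by a positive constant for all large $t$, contradicting the finiteness of the integral. Hence $\lim_{t\to +\infty}\|x(t)-z\| = 0$, which is exactly the strong convergence claim.

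The main obstacle is making sure the strongly convex correction genuinely propagates through the chain of inequalities \eqref{h-e-1'}--\eqref{ineq3} without spoiling the sign structure used there; this is really just bookkeeping since the $-\tfrac{\mu}{2}\|x(t)-z\|^2$ term has the right sign to be absorbed on the left of the estimate for $\dot E(t)$. The only substantive input beyond Lemma \ref{l1-op} is the uniqueness argument and the elementary fact that a nonnegative function whose square is integrable on $[0,+\infty)$ and which has a limit at infinity must have that limit equal to zero.
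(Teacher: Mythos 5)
Your proposal is correct and follows essentially the same route as the paper: the paper likewise replaces \eqref{phi-conv} by the strengthened inequality \eqref{phi-str-conv}, propagates the extra term $\frac{\mu}{2}\|x(t)-z\|^2$ through the chain leading to \eqref{ineq2'}, integrates using $(H)$ and \eqref{e-eps-bound} to get $\int_0^{+\infty}\|x(t)-z\|^2\,dt<+\infty$, and concludes via \eqref{op1}. Your added remark that strong convexity makes $S$ a singleton is a harmless elaboration of the paper's one-line uniqueness claim.
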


\begin{proof} Let $\mu>0$ be such that $\Phi$ is $\mu$-strongly
convex. In this case the optimization problem
\eqref{opt-intr} has a unique optimal solution, which we denote by $z$.

We replace \eqref{phi-conv} with the stronger inequality
\begin{equation}\label{phi-str-conv}\frac{\mu}{2}\|x(t)-z\|^2+\Phi(z)-\Phi(x(t))\leq \langle -\nabla \Phi(z),x(t)-z\rangle \ \forall t \in [0,+\infty)\end{equation}
and obtain (see the proof of Lemma \ref{l1-op} and \eqref{ineq2'}) for almost every $t \in [0,+\infty)$
\begin{align} & \dot E(t)+\frac{\mu}{2}\|x(t)-z\|^2+
\widetilde\beta(t)\left(-\Psi^*\left(\frac{-\nabla\Phi(z)}{\widetilde\beta(t)}\right)+\sigma_{\argmin \Psi}\left(\frac{-\nabla\Phi(z)}{\widetilde\beta(t)}\right)\right)\nonumber\\
\leq & \ \dot E(t)+\frac{\mu}{2}\|x(t)-z\|^2+ +\widetilde\beta(t)\Psi(x(t))+\langle \nabla\Phi(z),x(t)-z\rangle\nonumber\\
\leq &\ \dot E(t)
+\Phi(x(t))-\Phi(z)+\widetilde\beta(t)\Psi(x(t))\nonumber\\\leq & \ 0.\label{ineq2''}
\end{align}

Taking into account $(H)$ and that $E$ is bounded from below (see \eqref{e-eps-bound}), by integration of the above inequality
we obtain that there exists a constant $C>0$ such that $$\frac{\mu}{2}\int_0^T\|x(t)-z\|^2dt\leq C \ \forall T  \geq 0.$$
According to \eqref{op1}, $\lim_{t\rightarrow+\infty}\|x(t)-z\|$ exists, thus
$\|x(t)-z\|$ converges to $0$ as $t\rightarrow+\infty$ and the proof is complete.
\end{proof}

\end{document}